\def\e{{\mathbf e}}
\def\x{{\mathbf x}}
\def\y{{\mathbf y}}
\newcommand{\uno}{{\mathbf{1}}}
\newcommand{\R}{\mathbf{R}}
\newcommand{\thbo}{{\rm TH}}
\newcommand{\stab}{{\rm STAB}}
\newcommand{\estab}{{\rm ESTAB}}
\newcommand{\fra}{{\rm ESTAB}}
\newcommand{\qstab}{{\rm QSTAB}}
\newcommand{\astab}{{\rm ASTAB}}
\newcommand{\rstab}{{\rm RSTAB}}
\newcommand{\conv}{\mathrm{conv}}
\DeclareMathOperator{\cone}{cone}
\DeclareMathOperator{\diag}{diag}
\newcommand{\set}[1]{\left\{#1\right\}}
\newtheorem{theorem}{Theorem}
\newtheorem{lemma}[theorem]{Lemma}
\newtheorem{corollary}[theorem]{Corollary}
\newtheorem{conjecture}[theorem]{Conjecture}
\begin{document}
\title{Characterizing $N_+$-perfect line graphs}

\author[1]{Mariana Escalante}
\author[1]{Graciela Nasini}
\author[2]{Annegret Wagler}
\affil[1]{CONICET and FCEIA, Universidad Nacional de Rosario\\
Rosario, Argentina\\
$\{$mariana,nasini$\}$@fceia.unr.edu.ar}
\affil[2]{CNRS and LIMOS, Universit\'e Blaise Pascal\\ 
Clermont-Ferrand, France\\
wagler@isima.fr}

\maketitle

\begin{abstract}
\boldmath
The subject of this contribution is the study of the Lov\'asz-Schrijver PSD-operator $N_+$ applied to the edge relaxation of the stable set polytope of a graph. 
We are particularly interested in the problem of characterizing graphs for which $N_+$ generates the stable set polytope in one step, called $N_+$-perfect graphs. 
It is conjectured that the only $N_+$-perfect graphs are those whose stable set polytope is described by inequalities with near-bipartite support. 
So far, this conjecture has been proved for near-perfect graphs, fs-perfect graphs, and webs.
Here, we verify it for line graphs, by proving that in an $N_+$-perfect line graph the only facet-defining 
graphs are cliques and odd holes.
\end{abstract}


\section{Introduction}

The context of this paper is the study of the stable set polytope, different of its linear and semi-definite relaxations, and graph classes for which certain relaxations are tight. 
Our focus lies on $N_+$-perfect graphs: those graphs where a single application of the Lov\'asz-Schrijver PSD-operator $N_+$ to the edge relaxation yields the stable set polytope.

The \textit{stable set polytope} $\stab(G)$ of a graph $G=(V,E)$ is defined 
as the convex hull of the incidence vectors of all stable sets of $G$ 
(in a stable set all nodes are mutually nonadjacent). 

Two canonical relaxations of $\stab(G)$ are the
\textit{fractional} or \textit{edge constraint stable set polytope} 
$$
\estab(G) = \{\mathbf{x} \in \R_+^{|V|}: x_i + x_j \: \le \: 1, ij \in E \}, 
$$
and the \textit{clique constraint stable set polytope} 
$$
\qstab(G) = \{\mathbf{x} \in \R_+^{|V|}: \sum_{i \in Q} \, x_i \: \le \: 1,\ Q \subseteq V\ \mbox{clique} \}
$$
(in a clique all nodes are mutually adjacent, hence a clique and a stable set share at most one node). We have 
$$
\stab(G) \subseteq \qstab(G) \subseteq \estab(G)
$$ 
for any graph, where $\stab(G)$ equals $\estab(G)$ for bipartite graphs, and $\qstab(G)$ for perfect graphs only \cite{Chvatal1975}.  

According to a famous characterization achieved by Chudnovsky et
al.~\cite{ChudnovskyEtAl2006}, perfect graphs are precisely the graphs without
chordless cycles $C_{2k+1}$ with $k \geq 2$, termed \textit{odd holes}, or their complements, the \textit{odd antiholes} $\overline C_{2k+1}$ (the complement $\overline G$ has the same nodes as $G$, but two nodes are adjacent in $\overline G$ if and only if they are non-adjacent in $G$).  

Perfect graphs turned out to be an interesting and important class with a rich structure and a nice algorithmic behaviour \cite{GrotschelLovaszSchrijver1988}.  
However, solving the stable set problem for a perfect graph $G$ by maximizing a linear objective function over $\qstab(G)$ does not work directly \cite{GrotschelLovaszSchrijver1981}, but only via a detour involving a geometric representation of graphs \cite{Lovasz1979} and the resulting \textit{theta-body} $\thbo(G)$ introduced by Lov\'asz et al.~\cite{GrotschelLovaszSchrijver1988}. 

An orthonormal representation of a graph $G=(V,E)$ is a sequence $(\mathbf{u_i} : i \in V)$ of $|V|$ unit-length vectors $\mathbf{u_i} \in \R^N$, 
such that $\mathbf{u_i}^T\mathbf{u_j} = 0$ for all $ij \not\in E$. For any orthonormal representation of $G$ and any additional unit-length vector $\mathbf{c} \in \R^N$, the corresponding orthonormal representation constraint is $\sum_{i \in V} (\mathbf{c}^T\mathbf{u_i})^2 x_i \leq 1$. $\thbo(G)$ denotes the convex set of all vectors $\mathbf{x} \in \R_+^{|V|}$ satisfying all orthonormal representation constraints for $G$. 
For any graph $G$, we have 
\[
\stab(G) \subseteq \thbo(G) \subseteq \qstab(G)
\]
and that approximating a linear objective function over $\thbo(G)$ can be done with arbitrary precision in polynomial time \cite{GrotschelLovaszSchrijver1988}.  
Most notably, the same authors proved a beautiful characterization of perfect graphs: 
\begin{equation} \label{equivalencias}
\begin{array}{rcl}
G \textrm{ is perfect} & \Leftrightarrow & \thbo(G)=\stab(G) \\
                       & \Leftrightarrow & \thbo(G)=\qstab(G) \\
                       & \Leftrightarrow & \thbo(G) \textrm{ is polyhedral} \\
\end{array}
\end{equation}
which even shows that optimizing a linear function on polyhedral $\thbo(G)$ can be done in polynomial time.

For all imperfect graphs $G$ it follows that $\stab(G)$ does not coincide with any of the above relaxations.  
It is, thus, natural to study further relaxations and to combinatorially characterize those graphs where $\stab(G)$ equals one of them.

\subsection{A linear relaxation and rank-perfect graphs} 

Rank-perfect graphs are introduced in \cite{Wagler2000} in order to obtain a superclass of perfect graphs in terms of a further linear relaxation of $\stab(G)$. 
As natural generalization of the clique constraints describing $\qstab(G)$, we consider rank constraints 
\[
\mathbf{x}(G') = \sum_{i \in G'} \, x_i \, \le \, \alpha(G') 
\] 
associated with arbitrary induced subgraphs $G' \subseteq G$. 
By the choice of the right hand side $\alpha(G')$, denoting the size of a largest stable set in $G'$, rank constraints are obviously valid for $\stab(G)$. The \textit{rank constraint stable set polytope} 
\[
\rstab(G) = \{ \mathbf{x} \in \R^{|V|} : \sum_{i \in G'} \, x_i \: \le \: \alpha(G'),\ G' \subseteq G \}
\]
is a further linear relaxation of $\stab(G)$. 
As clique constraints are special rank constraints (namely exactly those with $\alpha(G')=1$), we immediately obtain 
\[
\stab(G) \subseteq \rstab(G) \subseteq \qstab(G).
\]
A graph $G$ is \textit{rank-perfect} by \cite{Wagler2000} if and only if $\stab(G) = \rstab(G)$ holds. By definition, rank-perfect graphs include all perfect graphs (where rank constraints associated with cliques suffice). 
In general, by restricting the facet set to rank constraints associated with certain subgraphs only, several well-known graph classes are defined, e.g., 
\textit{near-perfect graphs} \cite{Shepherd1994} where only rank constraints associated with cliques and the whole graph are  allowed, or \textit{t-perfect} \cite{Chvatal1975} and \textit{h-perfect graphs} \cite{GrotschelLovaszSchrijver1988} where rank constraints associated with edges and odd cycles resp. cliques and odd holes are required.  

Further classes of rank-perfect graphs are antiwebs~\cite{Wagler2004_4OR} and line graphs~\cite{Edmonds1965,EdmondsPulleyblank1974}. 

An \textit{antiweb} $A^k_n$ is a graph with $n$ nodes $0, \ldots, n-1$ 
and edges $ij$ if and only if $k \leq |i-j| \leq n-k$ and $i \neq j$. 
Antiwebs include all 
complete graphs $K_n = A^1_n$, 
all odd holes $C_{2k+1} = A^k_{2k+1}$, and their complements $\overline C_{2k+1} = A^2_{2k+1}$, 
see Fig. \ref{grafos_antiwebs} for examples.
\begin{figure}
\begin{center}
\includegraphics[scale=0.8]{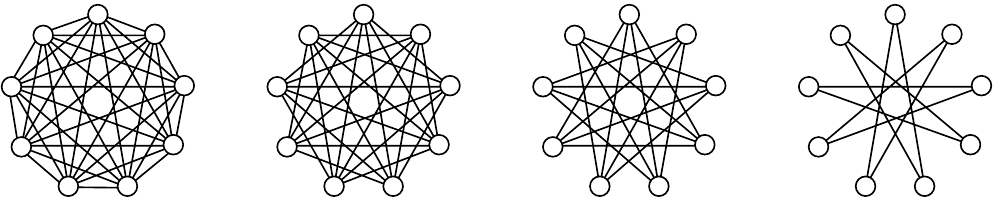}
\caption{The antiwebs $A^k_9$.}
\label{grafos_antiwebs}
\end{center}
\end{figure}

As common generalization of perfect, t-perfect and h-perfect graphs 
as well as antiwebs, the class of \textit{a-perfect graphs} was introduced 
in \cite{Wagler2005_4OR} 
as those graphs whose stable set polytopes are given by 
nonnegativity constraints and rank constraints associated with antiwebs only. 
Antiwebs are $a$-perfect by \cite{Wagler2004_4OR}, further examples of $a$-perfect graphs were presented in \cite{Wagler2005_4OR}.

A \textit{line graph} is obtained by taking the edges of a graph as nodes and 
connecting two nodes if and only if the corresponding edges are incident, 
see Fig. \ref{grafos_line} for illustration. 
Since matchings of the original graph 
correspond to stable sets of the line graph, 
the results on the matching polytope by~\cite{Edmonds1965,EdmondsPulleyblank1974} imply 
that line graphs are also rank-perfect, see Section 3 for details.
\begin{figure}[h]
\begin{center}
\includegraphics[scale=0.8]{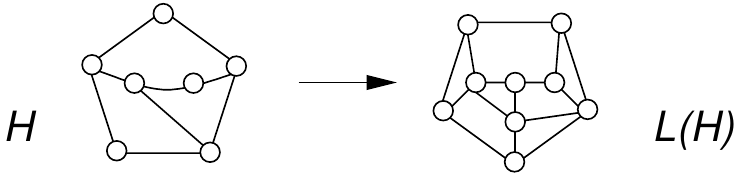}
\caption{A graph and its line graph.}
\label{grafos_line}
\end{center}
\end{figure}

\subsection{A semi-definite relaxation and $N_+$-perfect graphs} 

In the early nineties, Lov\'{a}sz and
Schrijver introduced the PSD-operator $N_+$ which, applied over the edge relaxation $\estab(G)$, generates the positive semi-definite relaxation $N_+(G)$ stronger than $\thbo(G)$ \cite{LovaszSchrijver1991}.
We denote by $\e_0, \e_1, \dots, \e_n$ the vectors of the canonical basis of $\R^{n+1}$ (where  the first coordinate is indexed zero), $\uno$ the vector with all components equal to 1 and $S_+^{n}$ the space of $(n \times n)$ symmetric and positive  semidefinite matrices with real entries.   
Given a convex set $K$ in $[0,1]^n$, let
\[
 \cone(K)= \left\{\left(\begin{array}{c} x_0\\ \x \end{array}
 \right) \in \R^{n+1}: \x=x_0 \y; \;\;  \y \in K \right\}.
 \]
Then, we define the 
set $ M_{+}(K) = $
 \begin{eqnarray*}
 \left\{Y \in S_+^{n+1}: \right.
 & & Y\e_0 = \diag (Y),\\
 & & Y\e_i \in \cone(K), \\ 
 & & \left. Y (\e_0 - \e_i) \in \cone(K), \; 
 i=1,\dots,n \right\},
 \end{eqnarray*}
where $\diag (Y)$ denotes the vector whose $i$-th entry is $Y_{ii}$, for every $i=0,\dots,n$.  
Projecting this lifting back to the space $\R^n$ results in $N_+(K) = $
 \[
 \set{ \x \in [0,1]^n : \left(\begin{array}{c} 1\\ \x \end{array}
 \right)= Y \e_0, \mbox{ for some } Y \in M_{+}(K)}.
 \]
In \cite{LovaszSchrijver1991}, Lov\'{a}sz and Schrijver proved that $N_+(K)$ is a relaxation of  the convex hull of integer solutions in $K$ and that $N_+^n(K)=\conv(K\cap \{0,1\}^n)$, where $N_+^0(K)=K$ and $N_+^k(K)=N_+(N_+^{k-1}(K))$ for 
$k\geq 1$. 
In this work we focus on the behaviour of a single application of the $N_+$-operator to the edge relaxation of the stable set polytope of a graph.

In order to simplify the notation we write $N_+(G)=N_+(\fra(G))$. 
In \cite{LovaszSchrijver1991} it is shown that 
\[\stab(G) \subseteq N_+(G) \subseteq \thbo(G) \subseteq \qstab(G).\]
As it holds for perfect graphs, the stable set problem can be solved in polynomial time for the class of graphs for which $N_+(G)=\stab(G)$. We will call these graphs \emph{$N_+$-perfect}. 
A graph $G$ that is not $N_+$-perfect is called \emph{$N_+$-imperfect}.

In \cite{BENT2011}, the authors look for a characterization of $N_+$-perfect graphs similar to the above characterizations for perfect graphs. 
More precisely, they intend to find an appropriate polyhedral relaxation of $\stab(G)$ playing the role of $\qstab(G)$ in \eqref{equivalencias}. 
Following this line, the following conjecture has been recently proposed in \cite{BENT2013}:

\begin{conjecture}[\cite{BENT2013}] \label{conjecture}[$N_+$-Perfect Graph Conjecture]
The stable set polytope of every $N_+$-perfect graph can be described by facet-defining inequalities with near-bipartite support. 
\end{conjecture}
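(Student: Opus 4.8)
The plan is to reduce the conjecture from arbitrary $N_+$-perfect graphs to individual facet-defining graphs, and then to confront those. Call a graph $H$ \emph{near-bipartite} if $H-N[v]$ is bipartite for every node $v$, and say an inequality $\sum_i a_i x_i \le b$ valid for $\stab(G)$ has \emph{near-bipartite support} if the subgraph induced by $\supp(a)=\{i: a_i\neq 0\}$ is near-bipartite. Call $H$ a \emph{facet-defining graph} if $\stab(H)$ has a facet $a^T\x \le b$ with $a_i>0$ for all $i$; the point of this notion is that the support graph of any facet of $\stab(G)$, for any $G$, is itself a facet-defining graph, the facet restricting to a full-support facet on it. Thus the conjecture is equivalent to the assertion that every facet-defining graph arising as the support of a facet of some $N_+$-perfect graph is near-bipartite.

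First I would record that $N_+$-perfection is inherited by induced subgraphs. This rests on the standard behaviour of the operator under node deletion: fixing $x_i=0$ in the lifting, a matrix $Y\in M_+(\estab(G))$ restricts to a feasible matrix for $G-i$, so that in the natural embedding $N_+(G)\cap\{x_i=0\}$ projects onto $N_+(G-i)$. Since the face $\{x_i=0\}$ of $\stab(G)$ is exactly $\stab(G-i)$, any $N_+$-perfect $G$ forces $N_+(G-i)=\stab(G-i)$, and induction propagates $N_+$-perfection to every induced subgraph. Combined with the support-graph reduction above, it therefore suffices to prove the single statement: \textbf{every $N_+$-perfect facet-defining graph is near-bipartite.}

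I would then argue the contrapositive. Let $H$ be a facet-defining graph that is \emph{not} near-bipartite, with full-support facet $a^T\x\le b$, so there is a node $v$ for which $H-N[v]$ contains an odd hole; the minimal obstructions to near-bipartiteness include the odd antiholes $\overline C_{2k+1}$ with $k\ge 3$, which are known to be $N_+$-imperfect. The target is a uniform certificate: produce $\x^*\in N_+(H)\setminus\stab(H)$ by exhibiting a matrix $Y\in M_+(\estab(H))$ with $Y\e_0=(1,\x^*)$ and $a^T\x^*>b$, witnessing $N_+(H)\neq\stab(H)$ and contradicting $N_+$-perfection of $H$.

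The hard part is precisely this last step, and it is why the conjecture remains open in general and is settled here only for line graphs: after the reduction, manufacturing such a certificate from ``not near-bipartite'' plus ``facet-defining'' is logically equivalent to the conjecture itself, so no purely local obstruction argument can shortcut it without a usable description of the PSD-lifting $M_+(\cdot)$ on arbitrary graphs, which is not available. The feasible route is to replace the missing general obstruction theory by a \emph{classification} of the facet-defining graphs inside the class at hand. For line graphs matching theory supplies this directly: under the edge/stable-set correspondence the facets of $\stab(G)$ stem from Edmonds' description of the matching polytope, forcing the only support graphs to be cliques and odd holes, both near-bipartite. I would accordingly attack the general conjecture class by class, seeking such structural classifications of facet-defining graphs and checking near-bipartiteness on the short resulting lists, while flagging the uniform obstruction theory for $N_+$ as the genuine missing ingredient.
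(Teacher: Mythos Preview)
The statement is a conjecture, and the paper does not prove it; it only verifies the special case of line graphs. Your proposal is not a proof either—you correctly flag that the general obstruction step is missing and that the feasible route is class by class. So at the level of the full conjecture there is no ``paper's proof'' to compare against; your reduction to the assertion ``every $N_+$-perfect facet-defining graph is near-bipartite'' is correct and is exactly the viewpoint the paper adopts.

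Where your proposal does overlap with the paper is the line-graph case, and there your sketch has a genuine gap. You write that Edmonds' matching-polytope description ``forc[es] the only support graphs to be cliques and odd holes.'' It does not. Theorem~\ref{thm_STAB_LineGraph} says the facet supports of $\stab(L(H))$ are cliques together with the line graphs $L(H')$ of \emph{all} 2-connected hypomatchable induced subgraphs $H'\subseteq H$, and most of these are neither cliques nor odd holes nor near-bipartite. The paper's actual work is to show that every such $L(H')$ other than a clique or an odd hole is $N_+$-imperfect: using an ear decomposition one finds inside $H'$ an odd hole plus a single ear (Theorem~\ref{thm_hypomatchable}), and then proves that the line graphs of these ``odd hole plus one ear'' graphs are obtained from the two smallest $N_+$-imperfect graphs $G_{LT}$ and $G_{EMN}$ by repeated canonical node stretchings, an operation known to preserve $N_+$-imperfection (Theorem~\ref{thm_N+imperfect_line}). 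Only after this does $N_+$-perfection of $L(H)$ force the facet supports down to cliques and odd holes. In other words, for line graphs the reduced statement ``$N_+$-perfect facet-defining $\Rightarrow$ near-bipartite'' is exactly what the paper has to establish, not something Edmonds hands you for free.

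A smaller slip: odd antiholes $\overline C_{2k+1}$ are antiwebs, hence near-bipartite (removing a closed neighbourhood leaves just two nodes) and therefore $N_+$-perfect, contrary to your parenthetical claim.
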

\emph{Near-bipartite} graphs,
defined in \cite{Shepherd1995}, are those graphs where removing all neighbors of an arbitrary node and the node itself leaves the resulting graph bipartite.
Antiwebs and complements of line graphs are examples of near-bipartite graphs. 
Again from results in \cite{LovaszSchrijver1991}, we know that graphs for which every facet-defining inequality of $\stab(G)$ has a near-bipartite support is $N_+$-perfect. 
Thus, Conjecture \ref{conjecture} states that these graphs are the only $N_+$-perfect graphs. 
In particular, near-bipartite and a-perfect graphs are $N_+$-perfect. 

In addition, it can be proved that every subgraph of an $N_+$-perfect graph is also $N_+$-perfect.
This motivates the definition of \emph{minimally} $N_+$-\emph{imperfect graphs} as these $N_+$-imperfect graphs whose proper induced subgraphs are all $N_+$-perfect. 

\begin{figure}
\begin{center}
\includegraphics[scale=1.0]{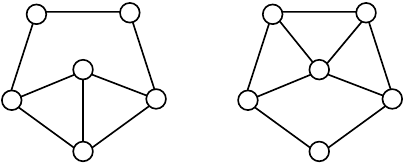}
\caption{The graphs $G_{LT}$ (on the left) and $G_{EMN}$ (on the right).}
\label{grafos6}
\end{center}
\end{figure}
In \cite{EMN2006} and \cite{LiptakTuncel2003} it was proved that all
the imperfect graphs with at most $6$ nodes are $N_+$-perfect,
except the two imperfect near-perfect graphs depicted in Figure \ref{grafos6}.
The graph on the left is denoted by $G_{LT}$ and the other one is denoted by $G_{EMN}$. 
So, $G_{LT}$ and $G_{EMN}$ are the two smallest minimally $N_+$-imperfect graphs. 
Characterizing all minimally $N_+$-imperfect graphs within a certain graph class can be a way to attack Conjecture \ref{conjecture} for this class. 
That way,  Conjecture \ref{conjecture} has been already verified for near-perfect graphs by \cite{BENT2011}, 
fs-perfect graphs (where the only facet-defining subgraphs are cliques and the graph itself) by \cite{BENT2013}, and 
webs (the complements of antiwebs) by \cite{EN2014}. 

In this contribution, we verify Conjecture \ref{conjecture} for line graphs. 
For that, we present three infinite families of 
$N_+$-imperfect line graphs (Section 2) and show that all facet-defining subgraphs of a line graph different from cliques and odd holes contain one of these 
$N_+$-imperfect line graphs (Section 3). 
Finally, we notice that the graphs in the three presented families are minimally 
$N_+$-imperfect and, in fact, the only minimally $N_+$-imperfect line graphs. 
We close with some concluding remarks and lines of further research.

\section{Three families of $N_+$-imperfect line graphs}

In this section, we provide three infinite families of 
$N_+$-imperfect line graphs. 
For that, we apply an operation preserving $N_+$-imperfection to the two smallest $N_+$-imperfect graphs $G_{LT}$ and $G_{EMN}$ (note that both graphs are line graphs).

In \cite{LiptakTuncel2003}, the \emph{stretching} of a node $v$ is introduced as follows: 
Partition its neighborhood $N(v)$ into two nonempty, disjoint sets $A_1$ and $A_2$ (so $A_1 \cup A_2 = N(v)$, and $A_1 \cap A_2 = \emptyset$). 
A stretching of $v$ is obtained by replacing $v$ by two adjacent nodes $v_1$ and $v_2$, joining $v_i$ with every node in $A_i$ for $i \in \{1, 2\}$, and either subdividing the edge $v_1 v_2$ by one node $w$ or subdividing every edge between $v_2$ and $A_2$ with one node. 
Moreover, it is shown in \cite{LiptakTuncel2003} that the stretching of a node preserves $N_+$-imperfection. 

For our purpose, we will use the stretching of node $v$ in the case of subdividing the edge $v_1 v_2$ by one node $w$.
If $|A_1|=1$ or $|A_2|=1$ the stretching corresponds to the \emph{3-subdivision} of an edge (that is, when the edge $v_1 v_2$ 
is replaced by a path of length 3).

We next establish a connection between subdivisions of edges in a graph $H$ and stretchings of nodes in its line graph $L(H)$. 
Let $G$ be the line graph of $H$ and consider an edge $e = u_1 u_2$ in $H$ together with its corresponding node $v$ in $G$. 
If $e$ is a simple edge of $H$ (i.e., if there is no edge parallel to $e$ in $H$), then the neighborhood $N(v)$ of its corresponding node $v$ in $G$ partitions into two cliques $U_1$ and $U_2$ representing the edges in $H$ incident to $e$ in $u_1$ and $u_2$, respectively. 
Accordingly, we call a stretching of a node $v$ in a line graph \emph{canonical} if these cliques $U_1$ and $U_2$ are selected as partition of $N(v)$. 

For illustration, see Figure \ref{fig_stretching} which shows the graph $C_5 + c$ (a $5$-hole with one chord $c$), the graph $C_5 + E_3$ (obtained from $C_5 + c$ by subdividing $c$ into a path $E_3$ of length 3), and their line graphs, where $L(C_5 + E_3)$ results from $L(C_5 + c)$ by a canonical stretching of the node corresponding to $c$. 

\begin{figure}
\begin{center}
\includegraphics[scale=1.0]{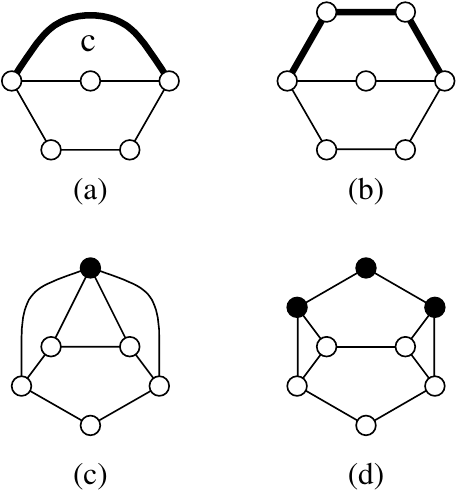}
\caption{This Figure shows (a) the graph $C_5 + c$ (a $5$-hole with one chord $c$), (b) the graph $C_5 + E_3$ (obtained from $C_5 + c$ by a 3-subdivision of $c$), (c) the line graph $L(C_5 + c)$, and (d) the line graph $L(C_5 + E_3)$, where $L(C_5 + E_3)$ results from $L(C_5 + c)$ by a canonical stretching of the (black-filled) node corresponding to $c$.}
\label{fig_stretching}
\end{center}
\end{figure}

In fact, we have in general:

\begin{lemma}\label{lem_stretching}
Let $e$ be a simple edge in $H$ and $v$ be the corresponding node in its line graph $G$. 
A 3-subdivision of $e$ in $H$ results in a canonical stretching of $v$ in $G$.
\end{lemma}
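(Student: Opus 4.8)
The plan is to unwind both the line-graph construction and the stretching operation explicitly, and then verify that the two resulting graphs coincide. First I would set up notation: let $e = u_1u_2$ be the simple edge of $H$ corresponding to the node $v$ of $G = L(H)$. Since $e$ is simple, the neighbours of $v$ in $G$ are exactly the nodes corresponding to edges of $H$ other than $e$ that meet $\{u_1,u_2\}$; these split into the clique $U_1$ of edges incident to $u_1$ and the clique $U_2$ of edges incident to $u_2$, with $U_1 \cap U_2 = \emptyset$ precisely because no edge is parallel to $e$. This is the partition $N(v) = U_1 \cup U_2$ used in the definition of a canonical stretching.

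Next I would describe $H'$, the $3$-subdivision of $e$: replace $e$ by a path $u_1 - a - b - u_2$ on two new internal nodes $a,b$ and two new edges beyond the relabelled ones; concretely $H'$ has edges $f_1 = u_1a$, $f_0 = ab$, $f_2 = bu_2$ in place of $e$, and all other edges of $H$ unchanged. I would then compute $L(H')$ directly. The node $f_1$ is adjacent in $L(H')$ to everything incident to $u_1$ (that is, all of $U_1$) and to $f_0$; the node $f_2$ is adjacent to everything incident to $u_2$ (all of $U_2$) and to $f_0$; the node $f_0$ is adjacent only to $f_1$ and $f_2$ (since $a,b$ have degree $2$ in $H'$); and adjacencies among the nodes of $G \setminus v$ are untouched because no other edge of $H$ changed.

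On the other side, I would write out the canonical stretching of $v$ in $G$ in the case used in the paper (subdividing the edge $v_1v_2$ by a node $w$): replace $v$ by adjacent nodes $v_1, v_2$, make $v_i$ adjacent to all of $U_i$, then subdivide $v_1v_2$ by $w$, so that $w$ is adjacent exactly to $v_1$ and $v_2$ and the edge $v_1v_2$ itself is deleted. Comparing the two descriptions, the bijection $v_1 \mapsto f_1$, $v_2 \mapsto f_2$, $w \mapsto f_0$, together with the identity on $G \setminus v = L(H) \setminus v$, is an isomorphism: $v_i \sim U_i$ matches $f_i \sim U_i$, $w \sim v_1, v_2$ matches $f_0 \sim f_1, f_2$, and there is no edge $v_1v_2$, matching the absence of an edge $f_1f_2$. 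Hence the $3$-subdivision of $e$ in $H$ induces exactly a canonical stretching of $v$ in $G$.

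I do not expect a serious obstacle here; the statement is essentially a bookkeeping identity between two constructions. The one point that needs care — and the only place an error could creep in — is the hypothesis that $e$ is \emph{simple}: this is what guarantees $U_1 \cap U_2 = \emptyset$ (so that $\{U_1,U_2\}$ is a genuine partition of $N(v)$ into two nonempty parts, as a stretching requires) and that no node of $G$ other than $v$ has its neighbourhood altered. I would therefore state explicitly where simplicity is used and check that $U_1, U_2$ are both nonempty, which holds since $u_1$ and $u_2$ each have degree at least $2$ in $H$ once $e$ lies in a line graph with $v$ having neighbours on both sides (and the degenerate cases $|U_i| = 0$ would make $v$ simplicial, a case that does not arise for the facet-defining subgraphs considered later).
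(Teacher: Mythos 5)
Your proposal is correct and follows essentially the same route as the paper's proof: explicitly construct the line graph of the subdivided graph, identify the three new nodes with $v_1$, $w$, $v_2$ of the canonical stretching, and check that all other adjacencies are unchanged. Your extra remarks on where simplicity of $e$ is used and on the nonemptiness of $U_1,U_2$ are sensible additions but do not change the argument.
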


\begin{proof}
Let $e = u_1 u_2$ be a simple edge of $H$ and $H'$ be the graph obtained from $H$ by replacing $e$ by the path $u_1, u_1', u_2', u_2$. 

The line graph $L(H')$ contains a node $v_1$ representing the edge $u_1 u_1'$ of $H'$, a node $w$ corresponding to the edge $u_1' u_2'$ of $H'$, and a node $v_2$ for the edge $u_2' u_2$ of $H'$. 

In $L(H')$, $v_1$ is adjacent to $w$ and to a clique $U_1$ corresponding to all edges different from $u_1 u_1'$ incident to $u_1$, $w$ has only $v_1$ and $v_2$ as neighbors, and $v_2$ is adjacent to $w$ and to a clique $U_2$ corresponding to all edges different from $u_2' u_2$ incident to $u_2$. 

All other nodes and adjacencies in $L(H')$ are as in $L(H)$, 
hence $L(H')$ corresponds exactly to the graph obtained from $L(H)$ by the canonical stretching of the node $v$ representing the edge $e$.
\end{proof}
This enables us to show:

\begin{theorem}
\label{thm_N+imperfect_line}
A line graph $L(H)$ is $N_+$-imperfect if $H$ is 
  \begin{itemize}
  \item an odd hole with one double edge;
  \item an odd hole with one chord;
  \item an odd hole with one odd path attached to non-adjacent nodes of the hole.
  \end{itemize}  
\end{theorem}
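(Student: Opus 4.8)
The plan is to build every graph in the three families, at the level of line graphs, out of one of the two smallest $N_+$-imperfect graphs by repeatedly performing canonical stretchings. First I would observe that $\{G_{LT},G_{EMN}\}=\{L(C_5+c),\,L(H_0)\}$, where $H_0$ is a $5$-hole with one doubled edge; both identifications follow by inspection, and both graphs are $N_+$-imperfect by \cite{LiptakTuncel2003,EMN2006}. A canonical stretching is in particular a stretching, hence preserves $N_+$-imperfection; and by Lemma~\ref{lem_stretching}, performing a canonical stretching on the node of $L(H)$ corresponding to a simple edge $e$ of $H$ produces precisely $L(H')$, where $H'$ arises from $H$ by a $3$-subdivision of $e$. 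So it is enough to prove that every $H$ in the three families can be reached from $C_5+c$ or from $H_0$ by a finite sequence of $3$-subdivisions of \emph{simple} edges.

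For an odd hole with one double edge, write $H$ as $C_{2k+1}$ with $k\ge 2$ and with the double edge on a fixed pair of adjacent hole nodes. Every hole edge except the doubled one is simple, and a $3$-subdivision of such an edge turns $C_{2k+1}$ with a double edge into $C_{2k+3}$ with a double edge on the same (still adjacent) pair of nodes. Thus $H$ is obtained from $H_0$ by $k-2$ such subdivisions, and consequently $L(H)$ is $N_+$-imperfect.

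For an odd hole with one chord, write $H=C_{2k+1}+c$ and record the chord by the lengths $(d,\,2k+1-d)$ of its two hole-arcs, with $2\le d\le k$. I would show by induction on $k$ that $H$ is reachable from $C_5+c$ by $3$-subdivisions of simple hole edges: subdividing an edge of the shorter arc sends $(d,\,2k+1-d)$ to $(d+2,\,2k+1-d)$, and subdividing an edge of the longer arc sends it to $(d,\,2k+3-d)$; a short case check shows that, as $d$ runs over the values admissible for $C_{2k+1}$, these moves produce every value admissible for $C_{2k+3}$. Since for $C_5$ the only admissible chord has $d=2$, namely the chord of $C_5+c$, the induction goes through and $L(H)$ is $N_+$-imperfect.

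For an odd hole with one odd path attached to two non-adjacent hole nodes, I would again start from $C_5+c$: the chord $c$ is simple, and a single $3$-subdivision of $c$ yields the $5$-hole with a length-$3$ path between two non-adjacent hole nodes, i.e.\ the graph $C_5+E_3$ of Figure~\ref{fig_stretching}, the smallest member of the family. Further $3$-subdivisions of edges of the attached path increase its length by $2$ (keeping it odd), while $3$-subdivisions of hole edges lengthen the hole; arguing as in the chord case — but now tracking only the weaker requirement that the two attachment nodes stay non-adjacent, which is automatic because $3$-subdivisions never shorten distances — every graph in the family is reached from $C_5+c$ by $3$-subdivisions of simple edges, so all these line graphs are $N_+$-imperfect as well. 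The step I expect to be the main obstacle is precisely the bookkeeping in these last two families: checking that the chord positions, path lengths, and attachment pairs realised by $3$-subdivisions exhaust the family, and — the point on which Lemma~\ref{lem_stretching} hinges — that every subdivided edge is simple (in the double-edge family this forces the doubled edge to be left untouched, which is why it is never moved).
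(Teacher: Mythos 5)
Your proposal is correct and follows essentially the same route as the paper's proof: identify $G_{LT}=L(C_5+d)$ and $G_{EMN}=L(C_5+c)$, use Lemma~\ref{lem_stretching} to translate $3$-subdivisions of simple edges of $H$ into canonical stretchings of $L(H)$, and invoke the fact that stretchings preserve $N_+$-imperfection. The only difference is that you carry out explicitly the bookkeeping (arc lengths of the chord, path lengths, attachment distances) that the paper leaves as a brief assertion; this is a welcome addition rather than a deviation.
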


\begin{proof}
Let $C_{2k+1}+d$ (resp. $C_{2k+1}+c$, resp. $C_{2k+1}+E_{\ell}$) denote the graph obtained from an odd hole $C_{2k+1}$ with $k \geq 2$ by adding one edge $d$ parallel to an edge of the hole (resp. adding one chord $c$ to the hole, resp. attaching one path of length $\ell$ to two non-adjacent nodes of the hole).

To establish the $N_+$-imperfection of the three families, we first observe that the two minimally $N_+$-imperfect graphs $G_{LT}$ and $G_{EMN}$ are line graphs: indeed, we have $G_{LT} = L(C_5+d)$ and $G_{EMN} = L(C_5+c)$.

Clearly, the graph $C_{2k+3}+d$ can be obtained from $C_{2k+1}+d$ by 3-subdivision of a simple edge (not being parallel to $d$) of the hole. 
Thus, any odd hole with one double edge can be obtained from $C_{5}+d$ by repeated 3-subdivisions of simple edges. 

According to Lemma \ref{lem_stretching}, their line graphs are obtained by repeated canonical stretchings of $G_{LT}$, which yields the first studied family of graphs. 

Analogously, $C_{2k+3}+c$ can be obtained from $C_{2k+1}+c$ by 3-subdivision of an edge of the hole. 
Thus, any odd hole with one (short or long) chord can be obtained from $C_{5}+c$ by repeated 3-subdivisions of edges different from $c$. 
Moreover, applying repeated 3-subdivisions of the chord $c$ yields graphs $C_{2k+1}+E_{\ell}$ where $E_{\ell}$ is a path of arbitrary odd length $\ell$  attached to two non-adjacent nodes of the hole at arbitrary distance. 

According to Lemma \ref{lem_stretching}, their line graphs are obtained by repeated canonical stretchings of $G_{EMN}$, which yields the two remaining families of graphs. 

That $G_{LT}$ and $G_{EMN}$ are minimally $N_+$-imperfect and canonical stretchings preserve $N_+$-imperfection completes the proof.
\end{proof}

\section{Characterizing $N_+$-perfect line graphs}

A combination of results by Edmods~\cite{Edmonds1965} and Edmonds \& Pulleyblank~\cite{EdmondsPulleyblank1974} about the matching polytope implies the following description of the stable set polytope of line graphs.

\begin{theorem}[Edmods~\cite{Edmonds1965}, Edmonds \& Pulleyblank~\cite{EdmondsPulleyblank1974}]
\label{thm_STAB_LineGraph}
If $G$ is the line graph of a graph $H$, then $\stab(G)$ is decribed by nonnegativity constraints, maximal clique constraints, and rank constraints  
\begin{equation}
\label{Eq_hypomatch-constraints}
x(L(H')) \le \frac{|V(H')|-1}{2}
\end{equation}
associated with the line graphs of 2-connected hypomatchable induced subgraphs $H' \subseteq H$.
\end{theorem}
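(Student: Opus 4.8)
The plan is to derive the statement from Edmonds' description of the matching polytope together with its facet refinement due to Edmonds and Pulleyblank, using the standard dictionary between a graph $H$ and its line graph. First I would record that a set of nodes of $G=L(H)$ is stable exactly when the corresponding set of edges of $H$ is a matching; hence, identifying $\R^{V(G)}$ with $\R^{E(H)}$, the polytope $\stab(G)$ coincides with the matching polytope $M(H)=\conv\{\chi^M:M\ \text{a matching of}\ H\}$. The whole proof then consists in rewriting a known linear description of $M(H)$ in terms of the cliques and induced subgraphs of $G$.

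Next I would invoke Edmonds' matching polytope theorem~\cite{Edmonds1965}: $M(H)$ is the set of $\x\in\R^{E(H)}_+$ satisfying the degree inequalities $\x(\delta_H(v))\le 1$ for all $v\in V(H)$ and the blossom inequalities $\x(E(H[S]))\le\frac{|S|-1}{2}$ for all $S\subseteq V(H)$ of odd size at least $3$ (the theorem is valid for multigraphs, as needed since $H$ may carry parallel edges). It then remains to read these off $G$. The edges of $H$ at a fixed node $v$ form a clique $Q_v$ of $G$, so a degree inequality is a clique constraint; since every clique of $G$ is either contained in a star $Q_v$ or formed by the edges of a triangle of $H$, the degree inequalities together with nonnegativity are equivalent to all clique constraints, hence in particular to the maximal clique constraints, and the maximal cliques not of star type are exactly the blossom inequalities with $|S|=3$. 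Likewise, for any $S\subseteq V(H)$ the edges of $H[S]$ are precisely the nodes of $L(H[S])=L(H')$, so the blossom inequality for $S$ reads $\x(L(H'))\le\frac{|V(H')|-1}{2}$.

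Finally I would apply the Edmonds--Pulleyblank refinement~\cite{EdmondsPulleyblank1974}: among all blossom inequalities, precisely those with $H[S]$ $2$-connected and hypomatchable define facets of $M(H)$; one whose induced subgraph is disconnected or has a cut vertex is implied by the blossom inequalities of its blocks, and one whose induced subgraph admits no near-perfect matching is dominated as well. Keeping only such $H'$ turns the blossom inequalities into the rank constraints~\eqref{Eq_hypomatch-constraints}; moreover hypomatchability forces $\frac{|V(H')|-1}{2}=\nu(H')=\alpha(L(H'))$, so these are genuine rank constraints in the sense of Section~1, and the case $|S|=3$ recovers the non-star maximal clique constraints. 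Putting the three translated families together yields the asserted description of $\stab(G)$.

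The main effort is organizational rather than technical: the two cited theorems carry all the mathematical content, and the one point demanding care is the bookkeeping of cliques --- checking that the degree inequalities of $H$ and the smallest blossom inequalities together account for exactly the maximal clique constraints of $L(H)$, and that the multigraph case of $H$ is covered so that no inequality needed for $\stab(L(H))$ is lost in the translation.
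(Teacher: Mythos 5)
Your derivation is correct and matches the route the paper intends: the paper states this theorem without proof as a direct combination of the two cited results, and your writeup is exactly that translation (stable sets of $L(H)$ as matchings of $H$, Edmonds' degree and blossom inequalities, then the Edmonds--Pulleyblank restriction to $2$-connected hypomatchable $S$). One sentence of yours is misstated --- the degree inequalities alone are \emph{not} equivalent to all clique constraints, since triangle cliques of $L(H)$ are not implied by them --- but you immediately repair this by observing that the non-star maximal cliques are precisely the $|S|=3$ blossom inequalities, so the argument as a whole is sound.
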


A graph $H$ is \emph{hypomatchable} if, for all nodes $v$ of $H$, the subgraph $H-v$ admits a perfect matching (i.e., a matching meeting all nodes) 
and is \emph{2-connected} if it remains connected after removing an arbitrary node.

Due to a result by Lov\'asz~\cite{Lovasz1972}, a graph $H$ is hypomatchable if and only if there is a sequence $H_0, H_1, \dots, H_k = H$ of graphs such that $H_0$ is a chordless odd cycle and for $1 \leq i \leq k$, 
$H_i$ is obtained from $H_{i-1}$ by adding an odd path $E_i$ that joins two (not necessarily distinct) nodes of $H_{i-1}$ and has all internal nodes outside $H_{i-1}$.
The odd paths $E_i = H_i - H_{i-1}$ are called \emph{ears} for $1 \leq i \leq k$ and 
the sequence $H_0, H_1, \dots, H_k = H$ an \emph{ear decomposition} of $H$, see Fig. \ref{ear_decomp} for illustration. 
Moreover, we call an ear of length at least three \emph{long}, and \emph{short} otherwise.

\begin{figure}[h]
\begin{center}
\includegraphics[scale=0.8]{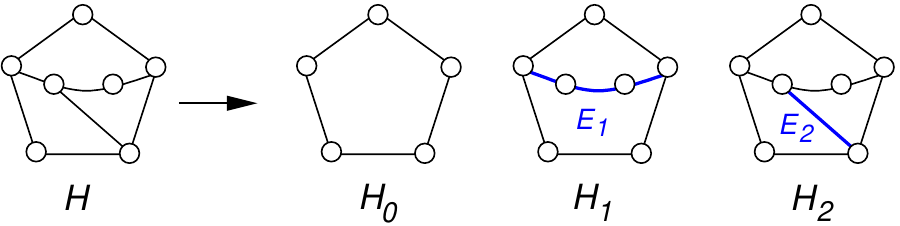}
\caption{A graph and an ear decomposition.}
\label{ear_decomp}
\end{center}
\end{figure}

Hypomatchable graphs have an odd number of nodes, 
are non-bipartite and connected, but 
neither necessarily 2-connected (since an ear $E_i$ may be attached to a single node of $H_{i-1}$) 
nor simple (since a short ear $E_i$ may become an edge parallel to one edge of $H_{i-1}$). 

However, if $H$ is 2-connected,  
Cornu\'ejols \& Pulleyblank~\cite{CornuejolsPulleyblank1983} proved 
that $H$ admits an ear decomposition $H_0, H_1, \dots, H_k = H$ 
with $H_i$ 2-connected for every $0 \leq i \leq k$.  
If, in addition, $H$ has at least five nodes, Wagler~\cite{Wagler2000} later proved 
that $H$ admits an ear decomposition $H_0, H_1, \dots, H_k = H$ 
where $H_0$ has at least five nodes and $H_i$ is 2-connected for every $0 \leq i \leq k$.   

Since the latter result is a key property for our argumentation, we give its proof for the sake of completeness:

\begin{lemma}[Wagler~\cite{Wagler2000}] Let $H$ be a 2-connected hypomatchable graph and $|V (H)| \geq 5$. 
Then there is an ear decomposition $H_0, H_1, \dots, H_k = H$ of $H$ such that each $H_i$ is 2-connected and $H_0$ is an odd cycle of length at least five.
\end{lemma}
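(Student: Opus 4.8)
The plan is to start from the ear decomposition provided by Cornu\'ejols and Pulleyblank~\cite{CornuejolsPulleyblank1983}, in which every member is already $2$-connected, and to repair only its beginning, replacing a triangular base by a long odd base cycle. So let $H_0, H_1, \dots, H_k = H$ be an ear decomposition with each $H_i$ being $2$-connected. Since $|V(H_0)|$ is odd, ``at least five'' just means ``$H_0 \neq C_3$''; hence we may assume $H_0 = C_3$, with node set $\{a,b,c\}$ and edges $ab$, $bc$, $ca$, and it remains to transform the decomposition.

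First I would locate the first long ear. As $|V(H)| \geq 5 > 3 = |V(H_0)|$ and a short ear (length $1$) adds no new node, some ear is long; let $E_j$ be the first one. Then $E_1, \dots, E_{j-1}$ are short, and in a $2$-connected graph a short ear joins two \emph{distinct} existing nodes, so each $E_i$ with $i < j$ is simply an edge parallel to a triangle edge and $V(H_{j-1}) = \{a,b,c\}$. Moreover $E_j$, being part of the $2$-connected $H_j$, is attached to two distinct nodes of the triangle; relabel so that they are $a$ and $b$, and let $\ell \geq 3$ denote the odd length of $E_j$.

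Next I would build the new decomposition. Let $H_0'$ be the cycle formed by $E_j$ together with the path $a,c,b$ (the two triangle edges $ac$ and $cb$); since the internal nodes of $E_j$ avoid $\{a,b,c\}$, this is a chordless cycle, and its length $\ell+2$ is odd and at least five. Then adjoin, as ears, first the remaining triangle edge $ab$, then the parallel edges $E_1, \dots, E_{j-1}$ in order, and finally $E_{j+1}, \dots, E_k$ unchanged. The graph obtained after the first $j$ of these steps is $H_0' \cup \{ab\} \cup E_1 \cup \dots \cup E_{j-1} = H_0 \cup E_1 \cup \dots \cup E_j = H_j$, so from there on $E_{j+1}, \dots, E_k$ can be attached exactly as in the original decomposition and the construction ends at $H$; since every edge of $H$ is used exactly once, this is a genuine ear decomposition.

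Finally I would verify $2$-connectivity along the new sequence. The base $H_0'$ is a cycle of length $\geq 3$, hence $2$-connected; and each subsequent graph arises by attaching an ear (a path with new interior) between two \emph{distinct} nodes already present: this holds for $ab$ and for the parallel edges by construction, and for $E_{j+1}, \dots, E_k$ because the original decomposition was $2$-connected, which forbids attaching an ear at a single node. Invoking the standard fact that adjoining such a path to a $2$-connected graph preserves $2$-connectivity, every member of the new decomposition is $2$-connected. The only delicate point is the bookkeeping in the previous paragraph --- that all ears preceding $E_j$ are parallel triangle edges, that $E_j$ spans two distinct triangle vertices, and that re-inserting $ab$ together with those parallel edges reproduces $H_j$ exactly; once this is secured, the parity count ($\ell$ odd $\Rightarrow \ell+2$ odd) and the propagation of $2$-connectivity are routine, and (if $H$ is permitted to be a multigraph) the short ears are precisely the parallel edges that must be tracked.
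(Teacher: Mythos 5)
Your proposal is correct and follows essentially the same route as the paper: both take the Cornu\'ejols--Pulleyblank $2$-connected ear decomposition, locate the first long ear $E_j$ attached to two distinct triangle nodes (noting all earlier ears are parallel edges so $V(H_{j-1})=V(H_0)$), and replace the base by the odd cycle $(H_0 - ab)\cup E_j$ of length at least five, reinserting $ab$ and the parallel edges as short ears before continuing with $E_{j+1},\dots,E_k$. Your write-up just spells out the bookkeeping and the $2$-connectivity propagation a bit more explicitly than the paper does.
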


\begin{proof} Since $H$ is 2-connected, it admits an ear decomposition \\ $H_0, H_1, \dots, H_k = H$ with $H_i$ 2-connected for $0 \leq i \leq k$ by \cite{CornuejolsPulleyblank1983}. 
We are ready if $H_0$ is an odd cycle of length $\geq$ 5, hence assume, for the sake of contradiction, that $H_0$ is a triangle. 

From $|V (H)| > 3$ follows that there is an ear with at least three edges, let $i \in \{1, . . . , k\}$ be the smallest index such that $E_i$ has length $\geq 3$. 
Then $V (H_{i-1}) = V (H_0)$ holds and $E_i$ has two distinct nodes $v, v' \in V (H_0)$ as endnodes
(since $H_i$ is 2-connected). Hence $(H_0 - vv') \cup E_i, vv', E_1, . . . , E_{i-1}$ is an ear decomposition of $H_i$ starting with an odd cycle of length $\geq 5$ and defining only 2-connected intermediate graphs. The ears $E_{i+1}, . . . , E_k$ complete this ear decomposition to the studied decomposition of $H$. 
\end{proof}

With the help of these results, 
we can give the following characterization of 2-connected hypomatchable graphs:

\begin{theorem}
\label{thm_hypomatchable}
Let $H$ be a 2-connected hypomatchable graph, then exactly one of the following conditions is true:
\begin{itemize}
\item $H$ has only three nodes;
\item $H$ equals an odd hole;
\item $H$ contains one of the following subgraphs:
  \begin{itemize}
  \item an odd hole with one double edge;
  \item an odd hole with one chord;
  \item an odd hole with one long ear, attached to non-adjacent nodes of the hole.
  \end{itemize}  
\end{itemize}  
\end{theorem}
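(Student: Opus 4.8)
The plan is to proceed by induction on the number of ears in an ear decomposition, exploiting the structural results just recalled. First I would handle the base cases: if $H$ has exactly three nodes it is a triangle (possibly with parallel edges, but as a \emph{graph} in the simple sense it is $K_3$), and if the ear decomposition has no ears at all beyond $H_0$, then $H = H_0$ is an odd cycle, i.e. an odd hole. So assume $H$ is 2-connected hypomatchable with $|V(H)| \geq 5$ and $H$ is not itself an odd hole; by Wagler's lemma we may fix an ear decomposition $H_0, H_1, \dots, H_k = H$ with every $H_i$ 2-connected and $H_0$ an odd cycle of length $\geq 5$. Since $H$ is not an odd hole, either $k \geq 1$ or $H_0$ already has a chord — but $H_0$ is chordless by construction, so $k \geq 1$.

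Next I would look at the first ear $E_1$ attached to $H_0$. Because $H_1$ is 2-connected, $E_1$ is attached to two \emph{distinct} nodes $v, v'$ of $H_0$. Now I split into cases according to the length of $E_1$ and the distance between $v$ and $v'$ along $H_0$. If $E_1$ has length $\geq 3$, then $H_0 \cup E_1$ is an odd hole (namely $H_0$) together with a long ear attached to two distinct nodes; I need these two nodes to be \emph{non-adjacent} on the hole. If $v, v'$ happen to be adjacent on $H_0$, I would instead reroute: the cycle formed by $E_1$ and the \emph{long} arc of $H_0$ between $v$ and $v'$ is an odd hole (one checks the parity: $H_0$ odd, $E_1$ odd, so the long arc has the right parity to close an odd cycle with $E_1$ — actually I should verify whether it is the long arc or the short arc that gives an odd cycle, and pick accordingly), and the remaining short arc becomes a long or short chord/ear of this new odd hole; a short arc of length $1$ is a chord, a short arc of length $\geq 2$ between two nodes of the new hole is, after checking non-adjacency, a long ear. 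If $E_1$ has length $1$, then $E_1$ is a chord of $H_0$ (if $v,v'$ are non-adjacent on $H_0$) — giving an odd hole with one chord — or $E_1$ is an edge parallel to an edge of $H_0$ (if $v,v'$ are adjacent) — giving an odd hole with one double edge. In every branch, $H_1 \subseteq H$ contains one of the three listed subgraphs, so $H$ does too.

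The main obstacle I anticipate is the parity bookkeeping in the rerouting step: when $v,v'$ are adjacent on $H_0$ and $E_1$ is long, I must show that replacing the short arc $vv'$ by $E_1$ yields an \emph{odd} cycle of length $\geq 5$, and that the leftover short arc is attached to two nodes that are non-adjacent on this new cycle (so that it qualifies as a long ear on non-adjacent nodes, not as a forbidden "ear on adjacent nodes"). Since $H_0$ has length $\geq 5$, the short arc $vv'$ together with the rest of $H_0$ being an odd cycle forces the long arc to have length $\geq 3$ with the parity making $E_1 \cup (\text{long arc})$ odd; I would spell this out by letting $|H_0| = 2m+1$, the short arc have length $1$, the long arc length $2m$, and $E_1$ length $2t+1$, so the new cycle has length $2m + 2t + 1 \geq 5$, odd, and the leftover edge $vv'$ connects two nodes at distance $2m \geq 4 > 1$ on the new cycle — hence non-adjacent. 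A parallel (but easier) computation covers the case where the relevant arc needs swapping. Finally I would note mutual exclusivity of the three top-level alternatives: three nodes versus $\geq 5$ nodes is immediate, and an odd hole contains none of the three subgraphs (it has no chords, no parallel edges, and no extra ear), so the "exactly one" claim holds.

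\medskip

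\noindent\emph{Remark on use in the sequel.} Combined with Theorem~\ref{thm_STAB_LineGraph} and Theorem~\ref{thm_N+imperfect_line}, this dichotomy is exactly what is needed: the facet-defining subgraphs of a line graph $L(H)$ are cliques, odd holes, and line graphs of 2-connected hypomatchable $H' \subseteq H$; by Theorem~\ref{thm_hypomatchable} any such $H'$ that is not a triangle or an odd hole contains one of the three configurations whose line graph is $N_+$-imperfect by Theorem~\ref{thm_N+imperfect_line}, so $L(H)$ (and hence the ambient $N_+$-perfect line graph) would be $N_+$-imperfect, a contradiction.
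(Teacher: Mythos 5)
Your proposal is correct and follows essentially the same route as the paper: fix Wagler's ear decomposition with $H_0$ an odd cycle of length at least five and all $H_i$ 2-connected, and classify $H_1 = H_0 \cup E_1$ by the length of $E_1$ and the adjacency of its (necessarily distinct) endpoints, rerouting the long ear attached to adjacent nodes into an odd hole with a chord. The only nitpick is that in the rerouting step the distance between $v$ and $v'$ on the new cycle is $\min(2m, 2t+1) \geq 3$ rather than $2m$, but the non-adjacency conclusion stands; your explicit check of mutual exclusivity is a small addition the paper omits.
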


\begin{proof}
Consider a 2-connected hypomatchable graph $H$ and distinguish the following cases. 


If $H$ has more than three nodes, then $H$ admits an ear decomposition $H_0, H_1, \dots, H_k = H$ 
where $H_0$ has at least five nodes and $H_i$ is 2-connected for every $0 \leq i \leq k$ by Wagler~\cite{Wagler2000}. 

If $H = H_0$, then $H$ equals an odd hole. 

If $H \neq H_0$, then $H_1$ equals one of the above mentioned graphs:
  \begin{itemize}
  \item an odd hole with one double edge (if $E_1$ is a short ear attached to adjacent nodes of $H_{0}$);
  \item an odd hole with one chord (if $E_1$ is a short ear attached to non-adjacent nodes of $H_{0}$ 
 or if $E_1$ is a long ear attached to adjacent nodes of $H_{0}$);
  \item an odd hole with one long ear $E_1$, attached to non-adjacent nodes of $H_{0}$ (otherwise).
  \end{itemize} 
(Recall that $E_1$ cannot be attached to a single node of $H_{0}$ since $H_1$ is 2-connected.)
\end{proof}

Combining Theorem~\ref{thm_hypomatchable} and Theorem~\ref{thm_N+imperfect_line}, we can further prove:

\begin{theorem}
\label{thm_line_hypomatchable}
If $H$ is a 2-connected hypomatchable graph, then $L(H)$ is either a clique, an odd hole, or $N_+$-imperfect.
\end{theorem}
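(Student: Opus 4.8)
The plan is to read off the statement by combining the structural classification of Theorem~\ref{thm_hypomatchable} with the $N_+$-imperfection of the three families established in Theorem~\ref{thm_N+imperfect_line}. Theorem~\ref{thm_hypomatchable} tells us that a $2$-connected hypomatchable graph $H$ falls into exactly one of three situations: it has only three nodes; it is an odd hole; or it contains, as a subgraph, an odd hole with one double edge, an odd hole with one chord, or an odd hole with one long ear attached to two non-adjacent hole nodes. I would treat these three situations in turn, matching the first two to the "clique" and "odd hole" outcomes and the third to the "$N_+$-imperfect" outcome.

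For the first situation, observe that $2$-connectivity of a three-node graph forces at least one edge between each of the three pairs of nodes, so every two edges of $H$ share a common endpoint; hence $L(H)$ is a complete graph, i.e.\ a clique. For the second situation, $L(C_{2k+1})=C_{2k+1}$ is again an odd hole. In the third situation, let $H'\subseteq H$ be the relevant subgraph; each of the three listed configurations is exactly a member of one of the three families $C_{2k+1}+d$, $C_{2k+1}+c$, $C_{2k+1}+E_\ell$ of Theorem~\ref{thm_N+imperfect_line} — in particular a long ear is an odd path of length at least three — so Theorem~\ref{thm_N+imperfect_line} yields that $L(H')$ is $N_+$-imperfect.

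It remains to pass from $L(H')$ being $N_+$-imperfect to $L(H)$ being $N_+$-imperfect, and here lies the one delicate point: the subgraph $H'$ produced via Theorem~\ref{thm_hypomatchable} (as $H_1$ in an ear decomposition of $H$) need not be an \emph{induced} subgraph of $H$, since a later ear could contribute a chord among $V(H')$. I would dispose of this by the elementary remark that adjacency in a line graph depends only on whether two edges share an endpoint, so for any subgraph $H'\subseteq H$ one has $L(H')=L(H)[E(H')]$; thus $L(H')$ is always an \emph{induced} subgraph of $L(H)$, whether or not $H'$ is induced in $H$. Since every induced subgraph of an $N_+$-perfect graph is $N_+$-perfect, the presence of the $N_+$-imperfect induced subgraph $L(H')$ forces $L(H)$ to be $N_+$-imperfect. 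The only real obstacle is making this induced-versus-subgraph point precise; once the identity $L(H')=L(H)[E(H')]$ is recorded, the theorem follows by the case distinction above.
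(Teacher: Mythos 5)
Your proof is correct and follows essentially the same route as the paper: distinguish the cases of Theorem~\ref{thm_hypomatchable} and apply Theorem~\ref{thm_N+imperfect_line} to the subgraph $H_1$ arising in the ear decomposition. Your explicit remark that $L(H')=L(H)[E(H')]$ is always an \emph{induced} subgraph of $L(H)$ even when $H'$ is not induced in $H$ is a welcome clarification of a step the paper leaves implicit.
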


\begin{proof}
Consider a 2-connected hypomatchable graph $H$ and distinguish the following cases. 

If $H$ has only three nodes, then $H$ has an ear decomposition \\ $H_0, H_1, \dots, H_k = H$ where $H_0$ is a triangle and all ears are short, becoming edges parallel to one edge of $H_{0}$. 
In this case, $L(H)$ is clearly a clique.

If $H$ has more than three nodes, then $H$ admits an ear decomposition $H_0, H_1, \dots, H_k = H$ 
where $H_0$ is an odd hole with at least five nodes and $H_i$ is 2-connected for every $0 \leq i \leq k$ by Wagler~\cite{Wagler2000}. 

If $H = H_0$, then $H$ equals an odd hole and $L(H)$ is clearly an odd hole, too. 

If $H \neq H_0$, then $H_1$ equals one of the graphs from Theorem~\ref{thm_hypomatchable}:
  \begin{itemize}
  \item an odd hole with one double edge;
  \item an odd hole with one chord;
  \item an odd hole with one long ear, attached to non-adjacent nodes of $H_{0}$.
  \end{itemize} 
According to Theorem~\ref{thm_N+imperfect_line}, the line graph $L(H_1)$ is $N_+$-imperfect, hence $L(H)$ is $N_+$-imperfect.
\end{proof}

Combining Theorem~\ref{thm_line_hypomatchable} and the description of stable set polytopes of line graphs from Theorem~\ref{thm_STAB_LineGraph} further yields:

\begin{corollary} 
\label{Cor_line_hypomatchable}
A line graph $L(H)$ is $N_+$-perfect if and only if all 2-connected hypomatchable induced subgraphs $H' \subseteq H$ have either only three nodes or are odd holes.
\end{corollary}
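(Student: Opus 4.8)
The plan is to deduce the corollary from the structural description of $\stab(L(H))$ in Theorem~\ref{thm_STAB_LineGraph} together with the trichotomy for line graphs of 2-connected hypomatchable graphs from Theorem~\ref{thm_line_hypomatchable}, and with the two standard facts recalled in the introduction: every induced subgraph of an $N_+$-perfect graph is $N_+$-perfect, and any graph all of whose facet-defining inequalities have near-bipartite support is $N_+$-perfect.

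First I would prove the ``only if'' direction by contraposition. Suppose some 2-connected hypomatchable induced subgraph $H' \subseteq H$ has more than three nodes and is not an odd hole. By Theorem~\ref{thm_line_hypomatchable}, $L(H')$ is then $N_+$-imperfect. Since $H'$ is an induced subgraph of $H$, the line graph $L(H')$ is an induced subgraph of $L(H)$ (the nodes of $L(H')$ are a subset of the edges of $H$, and two such edges are adjacent in $L(H')$ exactly when they are adjacent in $L(H)$). Because every induced subgraph of an $N_+$-perfect graph is $N_+$-perfect, the presence of the $N_+$-imperfect induced subgraph $L(H')$ forces $L(H)$ to be $N_+$-imperfect, as required.

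For the ``if'' direction, assume every 2-connected hypomatchable induced subgraph $H' \subseteq H$ has only three nodes or is an odd hole. By Theorem~\ref{thm_STAB_LineGraph}, $\stab(L(H))$ is described by nonnegativity constraints, maximal clique constraints, and the rank constraints \eqref{Eq_hypomatch-constraints} associated with $L(H')$ for the 2-connected hypomatchable induced subgraphs $H'$. Under our hypothesis, each such $H'$ is either a triangle, in which case $L(H')$ is a clique and the corresponding constraint \eqref{Eq_hypomatch-constraints} is just a clique constraint, or an odd hole $C_{2\ell+1}$, in which case $L(H') = C_{2\ell+1}$ is again an odd hole and \eqref{Eq_hypomatch-constraints} is the odd-hole rank constraint $x(C_{2\ell+1}) \le \ell$. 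Hence every facet-defining inequality of $\stab(L(H))$ is a nonnegativity constraint, a clique constraint, or an odd-hole constraint; in each case the support is a complete graph or an odd hole, which are near-bipartite. By the result from \cite{LovaszSchrijver1991} cited in the introduction, $L(H)$ is therefore $N_+$-perfect.

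The only delicate point, and the place I expect the argument to require a line of care rather than being entirely routine, is ensuring in the ``if'' direction that Theorem~\ref{thm_STAB_LineGraph} really does restrict attention to \emph{induced} subgraphs $H'$ that are 2-connected and hypomatchable, so that the hypothesis of the corollary applies to exactly the right family of subgraphs; and, symmetrically, checking in the ``only if'' direction that $L(H')$ genuinely sits inside $L(H)$ as an induced subgraph when $H'$ is induced in $H$. Both are immediate from the definitions, but they are the hinges on which the equivalence turns.
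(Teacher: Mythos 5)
Your proof is correct and follows exactly the route the paper intends: the ``only if'' direction comes from Theorem~\ref{thm_line_hypomatchable} plus heredity of $N_+$-perfection under induced subgraphs, and the ``if'' direction comes from Theorem~\ref{thm_STAB_LineGraph} reducing all facets to clique and odd-hole constraints, whose near-bipartite supports give $N_+$-perfection via the Lov\'asz--Schrijver result. The paper states the corollary as an immediate combination of these two theorems, and your write-up is a faithful, correctly detailed expansion of that combination.
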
 

Thus, the definition of h-perfect graphs finally implies the following characterization of $N_+$-perfect line graphs:

\begin{corollary} 
\label{Cor_line_N+perfect}
A line graph is $N_+$-perfect if and only if it is h-perfect.
\end{corollary}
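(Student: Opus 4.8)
The plan is to derive the equivalence "$L(H)$ is $N_+$-perfect $\iff$ $L(H)$ is h-perfect" directly from Corollary~\ref{Cor_line_hypomatchable} together with the polyhedral description of $\stab(L(H))$ in Theorem~\ref{thm_STAB_LineGraph}. The two implications are handled separately, and both are short once the preceding results are in place.

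For the forward direction, suppose $L(H)$ is $N_+$-perfect. By Corollary~\ref{Cor_line_hypomatchable}, every $2$-connected hypomatchable induced subgraph $H' \subseteq H$ has either exactly three nodes or is an odd hole. I would now inspect the rank constraints \eqref{Eq_hypomatch-constraints} from Theorem~\ref{thm_STAB_LineGraph}: when $H'$ has three nodes, $L(H')$ is a triangle and the associated constraint $x(L(H')) \le 1$ is an ordinary clique constraint; when $H'$ is an odd hole $C_{2k+1}$, then $L(H')$ is again an odd hole $C_{2k+1}$ and \eqref{Eq_hypomatch-constraints} becomes the odd-hole rank constraint $x(C_{2k+1}) \le k$. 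Hence $\stab(L(H))$ is described entirely by nonnegativity constraints, (maximal) clique constraints, and odd-hole constraints — which is exactly the definition of an h-perfect graph. So $L(H)$ is h-perfect.

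For the reverse direction, suppose $L(H)$ is h-perfect, i.e. $\stab(L(H))$ is given by nonnegativity, clique, and odd-hole constraints. I would argue contrapositively: if $L(H)$ were not $N_+$-perfect, then by Corollary~\ref{Cor_line_hypomatchable} some $2$-connected hypomatchable induced subgraph $H' \subseteq H$ is neither a triangle nor an odd hole; by Theorem~\ref{thm_line_hypomatchable}, $L(H')$ is then $N_+$-imperfect, and since $N_+$-perfection is inherited by induced subgraphs (stated in the Introduction), $L(H)$ would be $N_+$-imperfect as well — but every h-perfect graph is $N_+$-perfect, because its facet-defining inequalities (clique and odd-hole constraints) have near-bipartite support (cliques and odd holes are near-bipartite), so Conjecture~\ref{conjecture}'s "easy direction" from \cite{LovaszSchrijver1991} applies. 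This contradiction shows $L(H)$ is $N_+$-perfect. Alternatively, and perhaps more cleanly, I would simply note that h-perfect $\Rightarrow$ $N_+$-perfect is already known in full generality, so only the forward direction needs the new machinery.

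The only real subtlety — and the step I would be most careful about — is the bookkeeping in the forward direction: one must check that \emph{every} facet of $\stab(L(H))$ arising from Theorem~\ref{thm_STAB_LineGraph} is of clique or odd-hole type, which relies on Corollary~\ref{Cor_line_hypomatchable} controlling \emph{all} $2$-connected hypomatchable induced subgraphs of $H$, not merely the facet-inducing ones. Since a rank constraint \eqref{Eq_hypomatch-constraints} that is facet-defining certainly comes from such a subgraph, and Corollary~\ref{Cor_line_hypomatchable} forces that subgraph to be a triangle or an odd hole, the argument closes. No hard estimates or constructions are needed here; the weight of the proof has already been discharged in Sections~2 and~3.
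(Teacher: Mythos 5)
Your proposal is correct and matches the paper's (implicit) argument: the forward direction reads off h-perfection from Corollary~\ref{Cor_line_hypomatchable} together with the facet description in Theorem~\ref{thm_STAB_LineGraph}, and the reverse direction uses the Lov\'asz--Schrijver fact that facets with near-bipartite support (cliques and odd holes) force $N_+$-perfection. Nothing is missing.
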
 

Since both the class of line graphs and the class of $N_+$-perfect graphs are hereditary (i.e., closed under taking induced subgraphs), we can derive from a characterization of $N_+$-perfect line graphs also a characterization of minimally $N_+$-imperfect line graphs.

In fact, combining the assertions from Theorem \ref{thm_N+imperfect_line}, 
Theorem \ref{thm_STAB_LineGraph} by Edmonds~\cite{Edmonds1965} and Edmonds \& Pulleyblank~\cite{EdmondsPulleyblank1974}, 
Theorem \ref{thm_hypomatchable} and Corollary \ref{Cor_line_hypomatchable}, 
we can even conclude that the graphs in the three families presented in Theorem \ref{thm_N+imperfect_line} are minimally $N_+$-imperfect and, in fact, the only minimally $N_+$-imperfect line graphs. This leads to the following characterization:

\begin{corollary}
A line graph $L(H)$ is minimally $N_+$-imperfect if and only if $H$ is an odd hole with one ear 
attached to distinct nodes of the hole.
\end{corollary}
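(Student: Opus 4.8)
\medskip\noindent
The plan is to combine the description of $N_+$-perfect line graphs in Corollary~\ref{Cor_line_hypomatchable} with the structural Theorem~\ref{thm_hypomatchable} and the $N_+$-imperfection of the three families in Theorem~\ref{thm_N+imperfect_line}, and then to reduce minimality to an elementary fact about theta graphs. Throughout I would use that the class of $N_+$-perfect graphs is hereditary, and that the induced subgraphs of $L(H)$ are exactly the graphs $L(H')$ with $H'$ obtained from $H$ by deleting edges (and discarding isolated nodes); in particular $L(H)$ is minimally $N_+$-imperfect precisely when $L(H)$ is $N_+$-imperfect while $L(H')$ is $N_+$-perfect for every $H'$ obtained from $H$ by deleting at least one edge. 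I would also record that the three families of Theorem~\ref{thm_N+imperfect_line} are exactly the graphs ``an odd hole with one ear attached to two distinct nodes of the hole'': a short ear on two adjacent nodes is a double edge, a short ear on two non-adjacent nodes is a chord, a long ear on two non-adjacent nodes is the third family, and a long ear on two adjacent nodes $a,b$ is a (long) chord of the larger odd hole formed by that ear together with the arc of the hole avoiding the edge $ab$. Crucially, every such graph is a \emph{theta graph}: the two arcs of the hole between $a$ and $b$, together with the ear, partition its edge set into three internally disjoint $a$--$b$ paths.

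For the ``only if'' direction I would argue as follows. If $L(H)$ is minimally $N_+$-imperfect then, since $L(H)$ is $N_+$-imperfect, Corollary~\ref{Cor_line_hypomatchable} yields an induced subgraph $J\subseteq H$ that is 2-connected, hypomatchable, not an odd hole, and (being hypomatchable with more than three nodes) has at least five nodes. By Theorem~\ref{thm_hypomatchable} and its proof, $J$ admits an ear decomposition $J_0,J_1,\dots,J_m=J$ in which $J_0$ is an odd hole and $J_1$ is one of the three families. Since the ears of a decomposition only add edges, $E(J_1)\subseteq E(J)\subseteq E(H)$, so $L(J_1)$ is an induced subgraph of $L(H)$; by Theorem~\ref{thm_N+imperfect_line} it is $N_+$-imperfect, and minimality of $L(H)$ then forces $L(J_1)=L(H)$, hence $E(J_1)=E(H)$, hence $H=J_1$ up to isolated nodes. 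Thus $H$ is one of the three families, i.e.\ an odd hole with one ear attached to two distinct nodes.

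For the ``if'' direction, assume $H$ is an odd hole with one ear attached to two distinct nodes $a,b$. Then $H$ is one of the three families, so $L(H)$ is $N_+$-imperfect by Theorem~\ref{thm_N+imperfect_line}, and it remains to prove minimality. Let $H'$ be obtained from $H$ by deleting at least one edge; by Corollary~\ref{Cor_line_hypomatchable} it suffices to show that every 2-connected hypomatchable induced subgraph of $H'$ is a triangle or an odd hole. Since the edge set of the theta graph $H$ is partitioned into three internally disjoint $a$--$b$ paths, deleting an edge breaks one of these paths and destroys the two cycles through it, so at most one cycle survives in $H'$: when exactly one of the three paths is hit, $H'$ is the chordless cycle $Z$ formed by the other two paths with pendant trees attached, and otherwise $H'$ is a forest. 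The only possible 2-connected hypomatchable induced subgraph of $H'$ is then $Z$ itself, which is a triangle, an odd hole, or a cycle of even length (degenerately, a pair of parallel edges); in the last case $Z$ is bipartite, hence not hypomatchable. So every 2-connected hypomatchable induced subgraph of $H'$ is a triangle or an odd hole, $L(H')$ is $N_+$-perfect, and minimality follows.

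The step I expect to require the most care is the ``only if'' direction, and the pitfall to avoid is the tempting but false claim ``$L(H)$ $N_+$-imperfect implies $H$ contains an induced copy of one of the three families'': for instance $K_5$ contains no induced odd hole with an ear, yet $K_5$ is hypomatchable and $L(K_5)$ is $N_+$-imperfect. The remedy, as above, is to extract the family not as an induced subgraph of $H$ but as an \emph{intermediate graph $J_1$ of an ear decomposition}, so that $E(J_1)\subseteq E(H)$ and therefore $L(J_1)$ does appear inside $L(H)$ as an induced subgraph; heredity of $N_+$-perfection together with minimality then pins $H$ down exactly. The remaining ingredients — identifying the three families with ``odd hole plus one ear on distinct nodes'' and checking that deleting an edge from a theta graph leaves a cycle with pendant trees — are routine.
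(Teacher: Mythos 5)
Your proof is correct and follows essentially the same route as the paper's: both directions hinge on extracting the first ear of a 2-connected ear decomposition (your $J_1$, the paper's $H_1$) and invoking minimality together with Theorem~\ref{thm_N+imperfect_line} to force $L(H)=L(J_1)$, and on checking that deleting any edge from an odd hole plus one ear leaves at most one chordless cycle, so that all remaining 2-connected hypomatchable subgraphs are triangles or odd holes. The only noteworthy difference is that you reach the offending 2-connected hypomatchable subgraph via Corollary~\ref{Cor_line_hypomatchable} instead of the paper's appeal to the (unproved there) fact that minimally $N_+$-imperfect graphs have a full-support facet, which if anything makes your argument more self-contained.
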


\begin{proof}
On the one hand, each of the line graphs $L(H)$ with 
$H \in \{ C_{2k+1}+d, C_{2k+1}+c, C_{2k+1}+E_{\ell} \}$ from Theorem \ref{thm_N+imperfect_line} has, 
by construction, the property that $H$ is a 2-connected hypomatchable graph and admits an ear decomposition $H_0, H_1 = H$ with $H_0 = C_{2k+1}, k\geq 2$. 

Removing any edge $e$ from $H$ yields a graph $H-e$ which is 
either an odd hole (if $e \in \{ c,d \}$) 
or else not hypomatchable anymore. 
In all cases, $H-e$ is bipartite or contains at most one odd cycle such that $L(H-e)$ has cliques and odd holes as only facet-defining subgraphs. 

Hence, $L(H)$ is $N_+$-imperfect by Theorem \ref{thm_N+imperfect_line}, but all proper induced subgraphs are h-perfect (and, thus, $N_+$-perfect). 
This implies that $L(H)$ is minimally $N_+$-imperfect for all graphs within the 3 studied families. 

On the other hand, any minimally $N_+$-imperfect graph needs to have a full-support facet. 
Thus, any minimally $N_+$-imperfect line graph $L(H)$ is the line graph of a 2-connected hypomatchable graph $H$ (by Theorem \ref{thm_STAB_LineGraph}) which has more than 3 nodes and is different from an odd hole (by Theorem \ref{thm_hypomatchable}). 

Then $H$ admits an ear decomposition $H_0, H_1, \dots, H_k = H$ with $k \geq 1$ where $H_0$ has at least five nodes and $H_i$ is 2-connected for every $0 \leq i \leq k$ by Wagler~\cite{Wagler2000}.  

In fact,  $k = 1$ follows since $H_1 \in \{ C_{2k+1}+d, C_{2k+1}+c, C_{2k+1}+E_{\ell} \}$ holds as $H_1$ is 2-connected and, thus, already $L(H_1)$ is $N_+$-imperfect by Theorem \ref{thm_N+imperfect_line} (note: $L(H)$ contains $L(H_1)$, is $N_+$-imperfect and minimally $N_+$-imperfect only if $L(H) = L(H_1)$). 

Finally, $H_1$ has the stated property: it is an odd hole $H_0$ with one ear 
attached to distinct nodes of $H_0$.
\end{proof}

\section{Conclusion and further results}

In this contribution, 
we addressed the problem to verify Conjecture \ref{conjecture} for line graphs. 
For that, we presented three infinite families of 
$N_+$-imperfect line graphs (Section 2) and showed that all facet-defining subgraphs of a line graph different from cliques and odd holes contain one of these 
$N_+$-imperfect line graphs (Section 3). 
Since cliques and odd holes are clearly near-bipartite, Corollary \ref{Cor_line_N+perfect} shows that Conjecture \ref{conjecture} is true for line graphs. 

In the following, we will discuss a reformulation of Conjecture \ref{conjecture}.

As superclass of a-perfect graphs, \emph{joined a-perfect graphs} where introduced in \cite{CPW_2009} as those graphs whose only facet-defining subgraphs are complete joins of a clique and prime antiwebs (an antiweb $A^k_n$ is \emph{prime} if $k+1$ and $n$ are relatively prime integers). 
The inequalities obtained from complete joins of antiwebs, called \textit{joined antiweb constraints}, are of the form 
\[
\sum_{i \leq k} \frac{1}{\alpha(A_i)} x(A_i) + x(Q) \leq 1, 
\]
where $A_1, \ldots, A_k$ are different antiwebs and $Q$ is a clique (note that the inequalities are scaled to have right hand side equal to 1). 
We denote by $\astab^*(G)$ the linear relaxation of $\stab(G)$ obtained by all joined antiweb constraints. 
Then, a graph $G$ is joined a-perfect if $\stab(G)$ equals $\astab^*(G)$.

In particular, Shepherd \cite{Shepherd1995} showed that the stable set polytope of a near-bipartite graph has only facet-defining inequalities associated with complete joins of a clique and prime antiwebs. Thus, every near-bipartite graph is joined a-perfect (but the converse is not true since there exist perfect graphs that are not near-bipartite, see Figure \ref{fig_NotNearBip} for an example). 

\begin{figure}
\begin{center}
\includegraphics[scale=1.0]{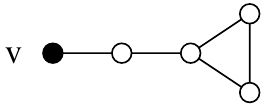}
\caption{A perfect (and, thus, joined a-perfect) graph with a node $v$ such that removing $v$ and its neighbor(s) leaves a non-bipartite graph.}
\label{fig_NotNearBip}
\end{center}
\end{figure}

Moreover, Conjecture \ref{conjecture} identifies $N_+$-perfect graphs and graphs for which its stable set polytope can be described by inequalities with near-bipartite support. It is known that, given a graph $G$, every facet defining inequality of $\stab(G)$ with support graph $G'$ is a facet defining inequality of $\stab(G')$. Then, again by Shepherd's results \cite{Shepherd1995}, those graphs for which its stable set polytope can be described by inequalities with near-bipartite support are joined a-perfect graphs. 

Taking this into account, Conjecture \ref{conjecture} can be reformulated as follows:

\begin{conjecture}
Every $N_+$-perfect graph is joined a-perfect. 
\end{conjecture}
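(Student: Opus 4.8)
The plan is to reduce the reformulated statement to Conjecture~\ref{conjecture}, which this paper already settles for line graphs, and then to describe the structural template by which Conjecture~\ref{conjecture} is attacked class by class; I do not expect a single uniform argument to close the general case.

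First I would make the reduction precise by showing that the class of joined a-perfect graphs coincides with the class of graphs whose stable set polytope is described by facets with near-bipartite support. One inclusion is already supplied by the discussion above via Shepherd~\cite{Shepherd1995}. For the reverse inclusion I would check that the support of every joined antiweb constraint is near-bipartite: such a support is a complete join $Q * A_1 * \cdots * A_k$ of a clique $Q$ and prime antiwebs $A_i$, and each factor is near-bipartite. I would then observe that a complete join of near-bipartite graphs is itself near-bipartite, since deleting a node $v$ together with $N(v)$ from a complete join deletes every factor except the one containing $v$, leaving a subgraph of a single near-bipartite factor, hence bipartite. Consequently every joined a-perfect graph has only near-bipartite-support facets, the two classes agree, and the reformulation becomes literally Conjecture~\ref{conjecture}.

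Next I would attack Conjecture~\ref{conjecture} through its contrapositive, using that $N_+$-perfection is hereditary and that, as recalled above, a facet of $\stab(G)$ with support $G'$ is a full-support facet of $\stab(G')$. It therefore suffices to treat facet-defining graphs $G'$, and to prove that every facet-defining $G'$ that is not near-bipartite is $N_+$-imperfect; heredity then propagates $N_+$-imperfection to any $G$ carrying a non-near-bipartite-support facet. The concrete mechanism is to exhibit inside each such $G'$ a minimally $N_+$-imperfect obstruction. This is exactly the template completed here for line graphs: Theorem~\ref{thm_STAB_LineGraph} pins the facet-defining graphs down as line graphs of $2$-connected hypomatchable graphs, Theorem~\ref{thm_hypomatchable} classifies the latter, and Theorem~\ref{thm_line_hypomatchable} shows every case other than a clique or odd hole contains one of the obstructions built from $G_{LT}$ and $G_{EMN}$ by canonical stretchings. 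The same template underlies the known verifications for near-perfect~\cite{BENT2011}, fs-perfect~\cite{BENT2013}, and web~\cite{EN2014} graphs.

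The hard part will be the general structural step, and this is where I expect the argument to stall. The template demands, for an arbitrary class, both a description of its facet-defining graphs (an analogue of Theorem~\ref{thm_STAB_LineGraph}) and a proof that the non-near-bipartite ones contain a minimally $N_+$-imperfect subgraph. Neither ingredient is available in general: no classification of the facet-defining graphs of $\stab(G)$ is known, and the complete list of minimally $N_+$-imperfect graphs is open. For this reason I would not aim to settle the reformulation outright, but rather to enlarge the supply of classes --- as done here for line graphs --- for which the finite obstruction set can be identified, treating the antiweb-supported facets of $G_{LT}$, $G_{EMN}$, and their stretchings as prototypes of a general family of obstructions that still awaits characterization.
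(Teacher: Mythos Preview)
The statement is a \emph{conjecture}, not a theorem; the paper offers no proof of it. What the paper does is explain, in the paragraph preceding the conjecture, why it is a reformulation of Conjecture~\ref{conjecture}: by Shepherd~\cite{Shepherd1995}, every facet with near-bipartite support is a joined antiweb constraint, so a graph whose facets all have near-bipartite support is joined a-perfect. The paper then notes that the converse implication (joined a-perfect $\Rightarrow$ $N_+$-perfect) follows from~\cite{LovaszSchrijver1991}, and leaves the conjecture open.

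Your proposal correctly recognises that no proof is available and does not pretend to give one. Your first paragraph, establishing that ``joined a-perfect'' and ``all facets have near-bipartite support'' describe the same class, is sound and in fact supplies a detail the paper leaves implicit: you argue the reverse inclusion by checking that the support $Q * A_1 * \cdots * A_k$ of a joined antiweb constraint is itself near-bipartite, via the observation that a complete join of near-bipartite graphs is near-bipartite. This is correct and matches the paper's intent. The remainder of your proposal is an accurate summary of the class-by-class template (facet description plus obstruction-finding) used here for line graphs, together with an honest acknowledgement that neither ingredient is available in general. That is exactly the status the paper leaves the conjecture in, so your treatment is appropriate.
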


The results of Lov\'asz and Schrijver \cite{LovaszSchrijver1991} prove that joined a-perfect graphs are $N_+$-perfect, thus, the conjecture states that both graph classes coincide and that $\astab^*(G)$ shall be the studied polyhedral relaxation of $\stab(G)$ playing the role of $\qstab(G)$ in \eqref{equivalencias}. 

In particular, in this paper we have proved that every $N_+$-perfect line graph is h-perfect. Then, combining these results, we obtain that a line graph is joined a-perfect if and only if it is h-perfect. However, it seems natural to look for a proof of the latter result independent of the $N_+$-operator.  

\begin{theorem}
A line graph is joined a-perfect if and only if it is h-perfect.
\end{theorem}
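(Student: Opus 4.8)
The plan is to establish the two implications separately, using for the harder one the machinery developed above. The implication ``h-perfect $\Rightarrow$ joined a-perfect'' is easy and in fact holds for every graph: if $\stab(G)$ is described by nonnegativity, clique and odd hole constraints, then each clique constraint $x(Q) \le 1$ is already a joined antiweb constraint (take $Q$ as the clique part and no antiweb), while an odd hole $C_{2k+1}$ is the prime antiweb $A^k_{2k+1}$ (prime since $k+1$ and $2k+1 = 2(k+1)-1$ are relatively prime), so the odd hole constraint $x(C_{2k+1}) \le k = \alpha(A^k_{2k+1})$ is a joined antiweb constraint as well (one antiweb, empty clique part, right-hand side scaled to $1$). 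Hence $\stab(G) = \astab^*(G)$, i.e.\ $G$ is joined a-perfect.

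For the converse, ``joined a-perfect line graph $\Rightarrow$ h-perfect'', the short route is to reduce to the $N_+$-results already obtained. Let $G = L(H)$ be joined a-perfect. Then every facet-defining inequality of $\stab(G)$ is a joined antiweb constraint, and its support graph is a complete join of a clique and prime antiwebs. I would first check that every such graph is near-bipartite: deleting a node $v$ together with its closed neighbourhood either removes everything (when $v$ lies in the clique part, which is complete to the rest) or leaves the bipartite graph $A \setminus N[v]$, where $A$ is the prime antiweb containing $v$ and bipartiteness follows because antiwebs are near-bipartite. Consequently every facet of $\stab(G)$ has near-bipartite support, so $G$ is $N_+$-perfect by the results of Lov\'asz and Schrijver~\cite{LovaszSchrijver1991}, and hence $G$ is h-perfect by Corollary~\ref{Cor_line_N+perfect}. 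This already proves the theorem.

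It would nevertheless be more satisfactory to prove the hard direction without invoking the $N_+$-operator, and the natural attempt is the following. Starting from a joined a-perfect line graph $L(H)$ and the description of $\stab(L(H))$ in Theorem~\ref{thm_STAB_LineGraph}, one would need to show that for every $2$-connected hypomatchable induced subgraph $H' \subseteq H$ whose constraint $x(L(H')) \le \frac{|V(H')|-1}{2}$ is facet-defining, being a joined antiweb constraint forces $H'$ to have three nodes or to be an odd hole. Comparing coefficients after scaling the constraint to right-hand side $1$ shows that either $|V(H')| = 3$, or the clique part is empty and all antiwebs $A_i$ appearing satisfy $\alpha(A_i) = \frac{|V(H')|-1}{2}$, so that $L(H')$ is a complete join of prime antiwebs. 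The main obstacle is then the remaining combinatorial step: classifying which prime antiwebs, and complete joins thereof, can occur as the line graph of a $2$-connected hypomatchable graph on at least five nodes, and verifying that only the odd holes $C_{2k+1} = A^k_{2k+1}$ do. I expect this to require combining the forbidden-subgraph characterization of line graphs with the ear-decomposition structure from Theorem~\ref{thm_hypomatchable}, thereby re-proving directly that the line graphs of the three families in Theorem~\ref{thm_N+imperfect_line} fail to be joined a-perfect.
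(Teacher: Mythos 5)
Your proof is correct, but it takes a different route from the one the paper adopts for this theorem. Your ``short route'' --- observing that the support of every joined antiweb constraint (a complete join of a clique and prime antiwebs) is near-bipartite, invoking the Lov\'asz--Schrijver result that such graphs are $N_+$-perfect, and then applying Corollary~\ref{Cor_line_N+perfect} --- is exactly the combination the paper mentions in the paragraph \emph{preceding} the theorem and then explicitly sets aside: the stated purpose of the theorem's own proof is to be independent of the $N_+$-operator. The paper's argument is the direct combinatorial one that you sketch in your final paragraph but do not complete. Where you see a ``main obstacle'' (classifying which prime antiwebs and complete joins thereof can be induced subgraphs of a line graph), the paper resolves it in two short steps: first, mirroring Shepherd's argument from \cite{Shepherd1995}, the only prime antiwebs occurring as induced subgraphs of line graphs are odd holes; second, no nontrivial complete join involving an odd hole can occur in a line graph, because any such join contains an odd wheel $W_{2k+1}$, and $W_5$ is one of Beineke's minimal forbidden subgraphs for line graphs while larger odd wheels contain a claw \cite{Beineke1968}. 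This leaves cliques and odd holes as the only possible facet-defining subgraphs, i.e.\ h-perfection, without touching Theorem~\ref{thm_STAB_LineGraph} or any $N_+$ machinery. So: your submission is a valid proof of the statement (given the earlier results of the paper), and your easy direction matches the paper's; but what the paper's proof buys --- and what your direct attempt stops short of --- is a purely polyhedral/graph-theoretic argument, which is the point of including the theorem at all.
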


\begin{proof}
A joined a-perfect graph has as only facet-defining subgraphs complete joins of a clique and prime antiwebs. 

Following the same argumentation as in \cite{Shepherd1995} that odd antiholes are the only prime antiwebs in complements of line graphs, we see that odd holes are the only prime antiwebs in line graphs. 

Analogous arguments as in \cite{Wagler2004_4OR,Wagler2005_4OR} yield that in a line graph, no complete join of a clique and odd holes or of two or more odd holes can occur: every such complete join would particularly contain an odd wheel $W_{2k+1}$, i.e., the complete join of a single node and an odd hole $C_{2k+1}$. 

The $W_5$ is one of the minimal forbidden subgraphs of line graphs by Beineke \cite{Beineke1968}, larger odd wheels contain a claw, another minimal forbidden subgraph of line graphs by \cite{Beineke1968}. 

Thus, the only remaining facet-defining subgraphs in a joined a-perfect line graph are cliques and odd holes. 

Conversely, an h-perfect line graph is clearly joined a-perfect.
\end{proof}


Our future lines of further research include
\begin{itemize}
\item to look for new families of graphs where the conjecture holds (e.g., by characterizing the minimally $N_+$-imperfect graphs within the class);
\item to find new subclasses of $N_+$-perfect or joined a-perfect graphs.
\end{itemize}
In all cases, the structural results would have algorithmic consequences since the stable set problem could be solved in polynomial time for the whole class or its intersection with $N_+$-perfect or joined a-perfect graphs by optimizing over $N_+(G)$.

\section*{Acknowledgment}

This work was supported by an ECOS-MINCyT cooperation France-Argentina, A12E01, PIP-CONICET 0241, PICT-ANPCyT 0361, PID-UNR 415 and 416.




%

\end{document}